\newcommand{\CC}{\mathbb{C}}
\newcommand{\GG}{\mathbb{G}}
\newcommand{\HH}{\mathbb{H}}
\newcommand{\NN}{\mathbb{N}}
\newcommand{\RR}{\mathbb{R}} 
\newcommand{\XX}{\mathbb{X}}
\newcommand{\YY}{\mathbb{Y}}
\newcommand{\ZZ}{\mathbb{Z}}
\newcommand{\cH}{\mathcal{H}}
\newcommand{\cO}{\mathcal{O}}
\renewcommand{\a}{\alpha}
\newcommand{\D}{\Delta} 
\renewcommand{\d}{\delta} 
\newcommand{\g}{\gamma} 
\renewcommand{\L}{\Lambda} 
\renewcommand{\l}{\lambda}
\renewcommand{\b}{\beta} 
\renewcommand{\k}{\kappa} 
\newcommand{\Om}{\Omega}
\newcommand{\om}{\omega} 
\newcommand{\s}{\sigma}
\newcommand{\eps}{\varepsilon}
\newcommand{\dual}{\mathrm{d}}
\newcommand{\er}{\rangle}
\newcommand{\tr}{\mathrm{tr}}
\newcommand{\ghs}{\textsc{ghs}}
\newcommand{\fk}{\textsc{fk}}
\newcommand{\dlr}{\textsc{dlr}}
\theoremstyle{definition} \newtheorem{definition}{Definition}
\newtheorem{theorem}[definition]{Theorem}
\newtheorem{proposition}[definition]{Proposition}
\newtheorem{lemma}[definition]{Lemma}
\begin{document}

\title{Critical Value of the Quantum Ising Model on Star-Like Graphs}

\author{Jakob E. Bj{\"o}rnberg}
\address{Statistical Laboratory,
Centre for
Mathematical Sciences,
University of Cambridge,
Wilberforce Road, Cambridge CB3 0WB, U.K.}
\email{jeb76@cam.ac.uk}

\begin{abstract}
We present a rigorous determination of the critical value of the ground-state
quantum Ising model in a transverse field, on a class of planar graphs which
we call \emph{star-like}.  These include the star graph, which is a junction
of several copies of $\ZZ$ at a single point.
Our approach is to use the graphical, or {\fk-}, representation of the model,
and the probabilistic and geometric tools associated with it.
\keywords{Ising model \and random-cluster model \and critical value}
\end{abstract}

\maketitle

\section{Introduction}\label{intro}

The Hamiltonian of the quantum Ising model with transverse field on a finite
graph $G=(V,E)$ is the operator 
\begin{equation}
H=-\frac{1}{2}\l\sum_{e=xy\in E}\s_x^{(3)}\s_y^{(3)}-\d\sum_{y\in V}\s_x^{(1)}
\end{equation}
on the Hilbert space $\cH=\bigotimes_{x\in V}\CC^2$.  Here the Pauli
spin-$1/2$ matrices 
\begin{equation}
\s_x^{(3)}=
\begin{pmatrix} 
1 & 0 \\
0 & -1
\end{pmatrix},\qquad
\s_x^{(1)}=
\begin{pmatrix} 
0 & 1 \\
1 & 0
\end{pmatrix},
\end{equation}
and we use as basis for each copy of $\CC^2$ in $\cH$ the vectors 
$|+\er_x=\big(\begin{smallmatrix} 1 \\ 0\end{smallmatrix}\big)$ and 
$|-\er_x=\big(\begin{smallmatrix} 0 \\ 1\end{smallmatrix}\big)$;
also, $\l,\d>0$ are the spin-coupling and external-field intensities,
respectively.  Let $\b\geq0$ denote the inverse temperature, and define the
positive temperature states
\begin{equation}
\rho_{G,\b}(Q)=\frac{1}{Z_G(\b)}\tr(e^{-\b H}Q),
\end{equation}
where $Z_G(\b)=\tr(e^{-\b H})$ and $Q\in\CC^{2\times 2}$.  Also define the
\emph{ground state} to be the limit $\rho_G$ of $\rho_{G,\b}$ as
$\b\rightarrow\infty$.  If $G_n$ is an increasing sequence of graphs
tending to an infinite
graph $S$, then we may also speak of infinite-volume limits
$\rho_{S,\b}=\lim_{n\rightarrow\infty}\rho_{G_n,\b}$ and
$\rho_S=\lim_{n\rightarrow\infty}\rho_{G_n}$.  The existence of these limits is
discussed in~\cite{aizenman92:_percol_ising}.

In this article we will use the {\fk-} or random-cluster representation of the
ground state, see for example~\cite{ioffe_geom} and references therein.
Details will be provided in the 
next section, but roughly speaking the \fk-representation may be considered as
a limit of 
``discrete time'' random-cluster  models on $S\times(\eps\ZZ)$ as
$\eps\downarrow0$.   This is related to the well-known mapping of the quantum
Ising model onto the classical Ising model in one dimension
higher~\cite{sachdev99}, and the \fk-representation of that
model~\cite{grimmett_RCM}.  The relevance of this
representation is that it relates the occurrence of \emph{long range order} in
the ground state to the existence of infinite percolation paths in
$S\times\RR$;  here we say that the model exhibits long range order if for all
$x$, the correlation function 
\begin{equation}
G(x,y)=\rho_S(\s_x^{(3)}\s_y^{(3)})
\end{equation}
is bounded below by a positive function of $x$.  There is a
critical value of the ratio $\l/\d$ above which the model exhibits long range
order, and below which it does not.

The main result of this article is a rigorous determination of the critical
ratio for a certain class of planar graphs $S$ (see
Definition~\ref{main_def}).  This extends the calculation for the graph
$S=\ZZ$, to, amongst other graphs, the \emph{star graph}, which is the
junction 
of several copies of $\ZZ$ at a single point.  See Figure~\ref{star_fig}.  
A special case of our main result (Theorem~\ref{main_thm})  is therefore the
following. 
\begin{theorem}\label{star_thm}
The critical ratio for the ground state quantum Ising model on the star graph
is $\l/\d=2$.
\end{theorem}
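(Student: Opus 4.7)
The plan is to use the {\fk-}representation of $\rho_S$ as a continuum random-cluster measure $\phi$ on $S\times\RR$, built from independent Poisson processes of ``bridges'' of intensity $\l/2$ between each pair of adjacent vertical lines and ``cuts'' of intensity $\d$ on each vertical line. In this setting, long range order is equivalent to non-decay of the two-point connection probability $\phi(x\leftrightarrow y)$, and for $S=\ZZ$ a standard self-duality argument identifies the critical ratio as $\l/\d=2$. The task is thus to show that attaching finitely many extra copies of $\ZZ_+$ at a single junction vertex $0$ does not shift this critical value.

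For the upper bound, observe that two opposite arms of the star together with $0$ form a subgraph of $S$ isomorphic to $\ZZ$. By monotonicity of the {\fk-}measure in the underlying graph, $\phi$ on $S\times\RR$ stochastically dominates the corresponding measure on this embedded $\ZZ\times\RR$. Hence if $\l/\d>2$, the embedded copy already exhibits long range order and $\rho_S(\s_x^{(3)}\s_y^{(3)})$ is bounded below for pairs $x,y$ on these two arms; by the symmetry of the star, long range order holds on all of $S$.

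For the lower bound, suppose $\l/\d<2$. One first establishes exponential decay of $\phi_{\ZZ\times\RR}(x\leftrightarrow y)$ throughout the subcritical regime, by a continuum analogue of the Aizenman--Barsky--Menshikov sharpness theorem adapted to the {\fk-}measure on $\ZZ\times\RR$. For $x,y$ lying in a common arm of $S$, graph-monotonicity reduces the bound to this $\ZZ\times\RR$ estimate. For $x,y$ in distinct arms, the cut-vertex property of $0$ in $S$ forces any open path from $x$ to $y$ to cross the time-line $\{0\}\times\RR$, so decomposing along the first and last crossings and applying the exponential bound on each arm yields decay of $\phi(x\leftrightarrow y)$ and hence of $\rho_S(\s_x^{(3)}\s_y^{(3)})$.

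The principal obstacle is the lower bound, and more specifically the sharpness statement required on $\ZZ\times\RR$: one needs exponential decay at \emph{every} $\l/\d<2$, not merely for sufficiently small ratios, and a standard Peierls or small-coupling argument will not suffice. A subsidiary difficulty is the decomposition argument near the junction, since bridges incident to $\{0\}\times\RR$ couple the otherwise independent arms, and one must verify that the conditional measure on each arm is still subcritical in a quantitative sense. Both obstacles should yield to the planarity of $S$ together with the self-dual structure of the {\fk-}measure, and dealing with them cleanly is the technical heart of the proof of the more general Theorem~\ref{main_thm}.
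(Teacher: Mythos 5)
Your upper bound is fine and matches the paper's Lemma~\ref{upperbound_lem}: the measure on $S\times\RR$ dominates the one on an embedded $\ZZ\times\RR$ (the paper gets this from the \dlr-property plus positive association), so $\l/\d>2$ forces long range order. The lower bound, however, contains a genuine gap, and it sits exactly at the step you pass over. Graph-monotonicity goes the \emph{wrong} way for your purpose: since the {\fk-}measure on the star dominates the measure on any arm, exponential decay on $\ZZ\times\RR$ (which is indeed available from the sharpness result of~\cite{bjogr2}) gives only a \emph{lower} bound on connection probabilities in $S\times\RR$, never an upper bound. So the sentence ``for $x,y$ lying in a common arm, graph-monotonicity reduces the bound to this $\ZZ\times\RR$ estimate'' is backwards, and the same objection defeats the crossing decomposition at the junction: after you condition on the first and last crossings of $\{0\}\times\RR$, the measure seen by a single arm is that of a half-plane $\ZZ_+\times\RR$ whose infinite boundary line can be effectively \emph{wired} by the other arms, and subcriticality of the full-plane measure does not by itself control connectivities under such a boundary condition. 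Declaring that ``planarity and self-duality'' will handle this is not an argument; note in particular that the star is not self-dual --- its dual is a hypergraph --- so the self-duality you want to invoke is only available arm by arm.

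Controlling that wired half-plane influence is precisely the technical heart of the paper, and it is handled differently from what you propose. The paper proves exponential decay in \emph{finite volume with wired boundary} (Proposition~\ref{exp_prop}), via a ghost-field construction and the Lieb inequality together with infinite-volume exponential decay from~\cite{bjogr2}; this is an Ising-specific ($q=2$) input, not a consequence of duality alone. From it one deduces, for the half-plane measures $\phi^w$ and its dual $\psi^f$, infinite dual paths in a wedge (Lemma~\ref{wedge_lem}) and then dual ``half-circuits'' of uniformly positive probability around boxes in each arm (Lemma~\ref{circuits_lem}). The conclusion for $\l/\d<2$ is then geometric rather than quantitative: one glues the half-circuits from all arms using positive association, seals off the finite central part with the positivity property, and invokes tail triviality to get $\Phi^0(\cO\not\leftrightarrow\infty)=1$; no decay estimate on the star itself is ever proved or needed. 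If you want to rescue your route, you would have to prove decay of the two-point function in $\ZZ_+\times\RR$ uniformly in the boundary condition on the junction line --- which is essentially Proposition~\ref{exp_prop} again --- so the missing ingredient cannot be avoided.
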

\begin{figure}[hbt]
\centering
\includegraphics{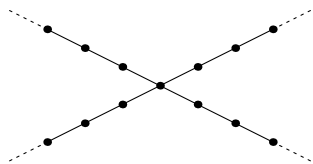}
\caption{The star graph has a central vertex of degree $k\geq3$ and $k$
  infinite arms, on which each vertex has degree 2.  In this illustration,
  $k=4$.}\label{star_fig} 
\end{figure}
In other words the critical ratio is the same for the star as for $\ZZ$;  this
is to be expected since the star is only \emph{locally} different from $\ZZ$.
We emphasise, however, that the class of graphs for which we prove this
result contains many more graphs than just the star.

The quantum Ising model on $\ZZ$ has been thoroughly studied, and the critical
ratio $\l/\d=2$ has been computed for this model in for
example~\cite{pfeuty70}.  See also~\cite{sachdev99} and references therein.
These calculations have relied on matrix methods and techniques such as
Jordan--Wigner transformation. 
Recently, in~\cite{bjogr2}, sharpness of the  
phase transition, and hence exponential decay of correlations below the
critical point, was established
rigorously for $G=\ZZ^d$ with any $d\geq 1$, using graphical methods similar
to the corresponding proof~\cite{abf} for the classical Ising model.  
Combining this result with 
duality arguments analogous to the classical two-dimensional random-cluster
model~\cite{grimmett_RCM}, this gives another proof that the critical ratio is
$\l/\d=2$ for this model, using only tools from stachastic geometry
(see~\cite{bjogr2} for details).   
One aim of this paper is to extend and illustrate the graphical
methods, and show how they can be applied to a wider range of structures than
just $\ZZ$.
The Ising model on the star-graph has also recently arisen in
the study of boundary effects in the two-dimensional classical Ising model,
see for example~\cite{trombettoni,martino_etal}.  Similar geometries have
also arisen in different problems in quantum theory, such as transport
properties of quantum wire systems,
see~\cite{chamon03,hou08,lal02}.

\section{Background and notation}\label{sec:1}

In this article we will let  $G=(V,E)$ be a \emph{star-like graph}:
\begin{definition}\label{main_def}
A star-like graph is a countably infinite connected planar graph, in which all
vertices have finite degree and only finitely many vertices have
degree larger than two.
\end{definition}
Such a graph is illustrated in Figure~\ref{graph_fig};  note that
the graph of Theorem~\ref{star_thm} is an example in which exactly one vertex
has degree at least three.
\begin{figure}[hbt]
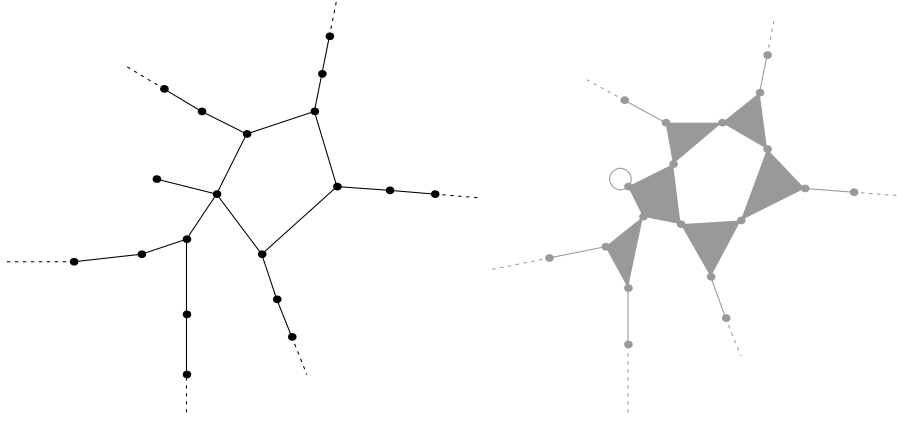

\centering
\includegraphics{starlike.2}
\includegraphics{starlike.3}
\caption{A star-like graph $G$ (left) and its line-hypergraph $H$ (right).
  Any vertex of degree $\geq3$ in $G$ is associated with a ``polygonal''
  hyperedge in $H$.}\label{graph_fig}
\end{figure}
Fix a planar embedding $\GG$ of $G$, and denote
$\XX=\GG\times\RR$;  also let $X=G\times\RR:=(V\times\RR,E\times\RR)$.  We
will sometimes use $X$ and $\XX$ interchangably.  Let $\cO$ be a fixed but
arbitrary vertex of $G$ of degree two or more, which we think of as the
origin.   

Recall that a \emph{hypergraph} is a set $W$ together with a collection $F$ of 
subsets of $W$, called \emph{edges};  a graph is a hypergraph in which all
edges contain two elements.  In our analysis we will use  a
suitably defined hypergraph ``dual'' of $\XX$:  let $H=(W,F)$ be the 
``\emph{line-hypergraph}'' of $G$, where $W=E$ and the set
$\{e_1,\dotsc,e_n\}\subseteq E=W$ is in $F$ 
if and only if $e_1,\dotsc,e_n$ are all the edges adjacent to some particular
vertex of $G$.  Note 
that only finitely many edges of $H$ have size larger than two.  There is a
natural planar embedding of $\HH$ defined via the embedding $\GG$, in which an
edge of size more than two is represented as a polygon.  See
Figure~\ref{graph_fig}.  Let $Y=H\times\RR$ and $\YY=\HH\times\RR$.

Our configuration space $\Om$ will be the set of pairs $\om=(B,D)$ where
$B\subseteq E\times\RR$ and 
$D\subseteq V\times\RR$ are \emph{locally finite}, which is to say that 
$B\cap (\{e\}\times[-n,n])$ and $D\cap (\{v\}\times[-n,n])$ are finite sets
for all $e\in E$, $v\in V$ and $n\in\NN$. 
We think of $B$ as a set of \emph{bridges} and $D$ as a set of \emph{deaths}
or cuts.  There is a natural embedding of any $\om\in\Om$ into $\XX$, where
deaths are represented as missing points and bridges as ``horizontal'' lines
connecting two ``vertical'' lines.  See Figure~\ref{config_fig} for an
illustration of this when $G=\ZZ$.  
\begin{figure}[hbt]
\centering
\includegraphics{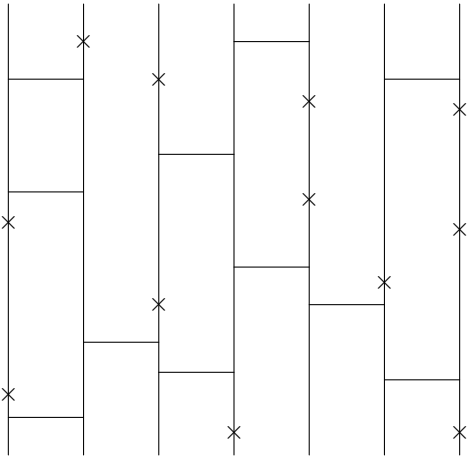}
\caption{A configuration $\om$ on $\ZZ\times\RR$.  Bridges are represented as
  horizontal line segments, and deaths as crosses.}\label{config_fig}
\end{figure}
Often we will identify $\om\in\Om$ with its embedding.  
Denote by $d(\cdot,\cdot)$ the graph distance in $G$, and let $\L_n$ denote
the set of points $(v,t)$ and $(e,t)$ where $v\in V$ has $d(v,\cO)\leq n$,
$e\in E$ has at least one endpoint at distance at most $n$ from $\cO$, and
$|t|\leq n$.  For each $\om\in\Om$, we will employ two \emph{restricted
  embeddings} $\om^1_n$ and $\om^0_n$, one ``wired'' and one ``free''.  The
free embedding $\om^0_n$ is simply the intersection of (the natural embedding
of) $\om$ with $\L_n$.  The wired embedding $\om_n^1$ is defined by
\begin{equation}
\om_n^1=\om_n^0\cup\{(v,t):d(v,\cO)=n+1, |t|\leq n\}
\cup\{(e,\pm n):e\in G_n\},
\end{equation}
where we have taken the liberty to identify $v\in V$ and $e\in E$ with their
embeddings in $\GG$.  In words, $\om_n^1$ is obtained by tying together the
top and bottom of $\om_n^0$, as well as all bridges portruding from its
``sides''.  We let the functions 
$k_n^0,k_n^1:\Om\rightarrow\NN$ count the number of connected components
of $\om_n^0$ and $\om_n^1$, respectively.

Equip $\Om$ with the Skorokhod topology and the associated
$\s$-algebra;  the details of their definitions are not immediately important,
but may be found in~\cite{bezuidenhout_grimmett} or~\cite{bjo_phd}.  Fix
$\l,\d>0$ and let $\mu=\mu_{\l,\d}$ 
be the probability measure on $\Om$ governed by a collection 
of independent Poisson processes $B_e$ on $\{e\}\times\RR$, for $e\in E$, and
$D_v$ on $\{v\}\times\RR$, for $v\in V$.  Here each $B_e$ has intensity $\l$,
each $D_v$ has intensity $\d$, and $B=\cup_{e\in E}B_e,D=\cup_{v\in V}D_v$.
This $\mu$ is the space-time (or ``continuum'') percolation measure
of~\cite{grimmett_stp}.  We may now define the random-cluster
probability measures. 
\begin{definition}\label{rcm_def}
The random-cluster measure $\Phi^b_n$ on $\L_n$ with parameters $\l,\d,q>0$
and boundary condition $b\in\{0,1\}$ is the probability measure on $\Om$ given
by 
\begin{equation}
\frac{d\Phi_n^b}{d\mu}(\om)\propto q^{k^b_n(\om)},\qquad\om\in\Om.
\end{equation}
\end{definition}

Let 
\begin{equation}
\theta^b=\Phi^b((\cO,0) \mbox{ lies in an unbounded component}).
\end{equation}
The following basic facts may be proved in a conventional manner, as
in~\cite[Theorem~5.5]{grimmett_RCM};  details for this particular model may be
found in~\cite{bjo_phd}.    
\begin{proposition}
Let $q\geq 1$.  The weak limits $\Phi^b:=\lim_{n\rightarrow\infty}\Phi^b_n$
exist, and enjoy a \emph{phase transition} in the sense  
that there is $\rho_c=\rho_c(q)\in(0,\infty)$, 
depending only on $q$ (and $G$), such
that $\theta^b=0$ if $\l/\d<\rho_c$ and $\theta^b>0$ if $\l/\d>\rho_c$.  We
call $\rho_c$ the \emph{critical value} of the random-cluster model on
$G\times\RR$.
\end{proposition}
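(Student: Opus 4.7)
The plan is to mirror the classical proof of the analogous result for the lattice random-cluster model (e.g.\ \cite[Theorem~5.5]{grimmett_RCM}) in the continuum framework of \cite{grimmett_stp,bjo_phd}. The argument splits into three parts: FKG and monotonicity, existence of the weak limit, and locating a non-trivial critical ratio.

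First, I would establish positive association for $\Phi^b_n$ when $q\geq 1$. Equip $\Om$ with the partial order $(B,D)\leq(B',D')$ iff $B\subseteq B'$ and $D'\subseteq D$, so that $k^b_n$ is non-increasing in $\om$. Since $k^b_n$ changes by at most one under any local modification (adding or removing a single bridge or death), the Holley lattice condition for the density $q^{k^b_n}$ reduces for $q\geq 1$ to a short case analysis on how pairs of modifications interact. Standard coupling arguments then give stochastic monotonicity of $\Phi^b_n$ in $\l$ (increasing, in the order above), $\d$ (decreasing), the volume $n$ (decreasing for $b=1$, increasing for $b=0$ on cylinder events in $\L_n$), and the boundary condition, with $\Phi^0_n\leq_{\mathrm{st}}\Phi^1_n$.

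Second, the weak limits $\Phi^b=\lim_n\Phi^b_n$ follow from the monotonicity in $n$ together with tightness on the Skorokhod space $\Om$. Tightness is a consequence of the stochastic upper bound $\Phi^b_n\leq_{\mathrm{st}}\mu_{\l,\d/q}$, which controls the bridge and death intensities by Poisson processes of uniformly bounded rate, so that the restrictions of $\om$ to any compact set stay locally finite in the limit.

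Third, for the phase transition, rescaling the time coordinate by a factor of $\d$ shows that $\theta^b$ depends on $(\l,\d)$ only through $\rho=\l/\d$; by monotonicity, $\rho_c^b:=\inf\{\rho:\theta^b(\rho)>0\}$ is a well-defined element of $[0,\infty]$. The stochastic dominations of $\Phi^b$ between Poisson measures with modified intensities $(\l/q,q\d)$ and $(\l,\d/q)$ reduce the task to exhibiting $(0,\infty)$-valued thresholds for ordinary continuum percolation on $G\times\RR$: a Peierls-type path-counting bound, using that $G$ is locally finite with only finitely many vertices of degree exceeding $2$, yields $\rho_c^b>0$, while embedding an infinite arm $\cong\ZZ$ of $G$ and invoking the known supercritical phase of continuum percolation on $\ZZ\times\RR$ gives $\rho_c^b<\infty$. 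Finally, $\rho_c^0=\rho_c^1$ follows from the standard argument that $\Phi^0=\Phi^1$ at all but countably many values of $\rho$, by convexity of the pressure in $\l$.

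The main obstacle is the continuum verification of the Holley condition and of tightness in the Skorokhod topology; both require some technical care around measurability of cluster counts on $\Om$, but follow the template of \cite{grimmett_stp,bjo_phd}. The remainder is essentially a translation of the lattice arguments.
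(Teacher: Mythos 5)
Your proposal is correct and follows essentially the route the paper itself relies on: it proves the proposition ``in a conventional manner, as in \cite[Theorem~5.5]{grimmett_RCM}'', deferring details to \cite{bjo_phd}, i.e.\ exactly your combination of Holley/FKG positive association, monotonicity in volume and boundary condition for the weak limits, comparison with continuum (Poisson) percolation to get $0<\rho_c<\infty$, rescaling to reduce to the ratio $\l/\d$, and uniqueness of the infinite-volume measure off a countable set to identify $\rho_c^0=\rho_c^1$. No substantive divergence from the paper's intended argument.
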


The relevance of the space-time random-cluster measures to the
quantum Ising (or more generally quantum Potts) model is explained
in~\cite{aizenman92:_percol_ising}; 
in particular \emph{the ground state  quantum Potts model on
  $G$ exhibits long-range-order iff the corresponding random-cluster model has 
  $\theta^b>0$}.  
Hence, to investigate the phase-diagram of the quantum Ising model we will set
$q=2$ and focus on finding the critical value $\rho_c$  above which
percolation occurs. 

Let us say a few more words about the ``dual''
$\YY$ of $\XX$.   Given any configuration $\om\in\Om$, one may
associate with it a \emph{dual} configuration on $\YY$ by placing a death
wherever 
$\om$ has a bridge, and a (hyper)bridge wherever $\om$ has a death. 
This is illustrated in Figure~\ref{duality_fig}.
\begin{figure}[hbt]
\centering
\includegraphics{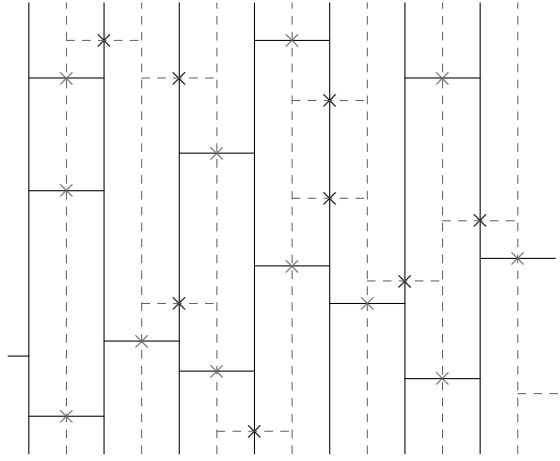}
\caption{Part of a configuration $\om$ (solid) and its dual $\om_\dual$
  (dashed with grey crosses) in the special case when
  $G=\ZZ$.}\label{duality_fig}  
\end{figure}
More precisely, we let $\Om_\dual$ be the set of pairs of locally finite
subsets of $F\times\RR$ and $W\times\RR$, and for each $\om=(B,D)\in\Om$ we
define its dual to be $\om_\dual:=(D,B)$.  As before, we may identify
$\om_\dual$ with its 
embedding in $\YY$, noting that some bridges may be embedded as polygons.  We
let $\Psi^b_n$ and $\Psi^b$ denote the laws of $\om_\dual$ under
$\Phi_n^{1-b}$ and $\Phi^{1-b}$ respectively.  

The case when  $G=\ZZ$ is particularly important,  and for this case we
use the lower case symbols $\phi$ and $\psi$ in place of $\Phi$ and $\Psi$,
respectively.  When $G=\ZZ$, the  dual space $\YY$ is isomorphic to $\XX$,
and we have the following result.  Again the proof is similar to that for the
discrete random-cluster model on $\ZZ^2$, but details for our model may be
found in~\cite{bjo_phd}. 
\begin{lemma}\label{planar_duality_lemma}
If $\phi_n^b,\phi^b$ have parameters $q$, $\l$ and $\d$, then the dual
measures $\psi_n^{1-b},\psi^{1-b}$ are random cluster measures with parameters
$q'=q$, $\l'=q\d$ and $\d'=\l/q$, and boundary condition $1-b$.
\end{lemma}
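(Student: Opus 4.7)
The plan is to mimic the classical proof of planar duality for the discrete random-cluster model on $\ZZ^2$ (see \cite{grimmett_RCM}), with Euler's formula replaced by its continuum analogue. First, the one-dimensional geometry of $G=\ZZ$ makes every hyperedge of $H$ of size $2$, so $H\cong\ZZ$ up to a half-integer shift, $\YY\cong\XX$, and $\psi_n^{1-b}$ may be viewed as a measure on $\Om$. Under the involution $\om=(B,D)\mapsto\om_\dual=(D,B)$, the Poisson measure $\mu_{\l,\d}$ pushes forward to a Poisson measure $\mu^*_{\d,\l}$ on $\YY$ of dual-bridge intensity $\d$ and dual-death intensity $\l$.

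The main step is a continuum Euler identity for the $\YY$-analogue $k_{n,\dual}^b$ of $k_n^b$: there is a deterministic constant $C_n^b$ such that
\begin{equation}\label{eq:cont_euler}
k_n^b(\om)-k_{n,\dual}^{1-b}(\om_\dual)=|D\cap\L_n|-|B\cap\L_n|+C_n^b,\qquad\om\in\Om.
\end{equation}
To prove \eqref{eq:cont_euler}, I would discretize the time axis as $\eps\ZZ$ and invoke the classical planar identity $k(\om)-k(\om_\dual)=c_0-|\om|$ (with correctly matched wired/free boundary conditions) on $\ZZ\times\eps\ZZ$: splitting the open edges into horizontal (bridges, $|B|$) and vertical (non-death intervals, $|E_v|-|D|$) contributions yields \eqref{eq:cont_euler} with $C_n^b=c_0-|E_v|$, and the identity persists as $\eps\downarrow 0$. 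Alternatively, one can verify \eqref{eq:cont_euler} directly by induction on the almost surely finite quantity $|B\cap\L_n|+|D\cap\L_n|$, checking that adding one bridge or death changes both sides of \eqref{eq:cont_euler} by the same amount.

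With \eqref{eq:cont_euler} in hand, the rest is a change of variables. Pushing $\phi_n^b$ forward by $\om\mapsto\om_\dual$ gives
\begin{equation}
\frac{d\psi_n^{1-b}}{d\mu^*_{\d,\l}}(\om_\dual)\propto q^{k_n^b(\om)}=q^{C_n^b}\,q^{k_{n,\dual}^{1-b}(\om_\dual)}\,q^{|D|-|B|},
\end{equation}
and since $|D|$ and $|B|$ equal the numbers of dual bridges and dual deaths of $\om_\dual$, the factor $q^{|D|-|B|}$ is (up to an $\om$-independent constant) the Radon--Nikodym derivative of $\mu^*_{q\d,\l/q}$ against $\mu^*_{\d,\l}$, because rescaling a Poisson intensity by $\a$ contributes $\a^N$ to the density, where $N$ is the point count in $\L_n$. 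Hence $\psi_n^{1-b}$ is the random-cluster measure on $\XX$ with $q'=q$, $\l'=q\d$, $\d'=\l/q$ and boundary condition $1-b$; the infinite-volume version follows from $\phi_n^b\to\phi^b$ and continuity of $\om\mapsto\om_\dual$ in the Skorokhod topology.

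The main obstacle is the Euler identity \eqref{eq:cont_euler}: one must match wired and free boundary conditions at $\partial\L_n$ and its dual with enough care to handle partial bridges crossing the boundary and the wiring conventions of $\om_n^b$, and it is precisely this matching that forces the pairing $b\leftrightarrow 1-b$ on the two sides of the duality.
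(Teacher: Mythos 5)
Your proposal is correct and follows exactly the route the paper itself indicates: the paper gives no proof of Lemma~\ref{planar_duality_lemma} beyond remarking that it is ``similar to that for the discrete random-cluster model on $\ZZ^2$'' (with details in \cite{bjo_phd}), and your sketch --- the Euler-type identity relating $k_n^b(\om)$ and $k_{n,\dual}^{1-b}(\om_\dual)$ via the counts $|B\cap\L_n|$, $|D\cap\L_n|$, followed by the Poisson change-of-intensity computation giving the factor $q^{|D|-|B|}$ and hence $\l'=q\d$, $\d'=\l/q$ --- is precisely that standard argument transferred to the space-time setting. The points you flag (matching of wired/free conventions at $\partial\L_n$, which forces the $b\leftrightarrow 1-b$ pairing) are indeed where the care is needed, and your treatment of them is consistent with the discrete case.
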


Recall that there is a partial order on
$\Om$ given by $(B',D')=\om'\geq\om=(B,D)$ if $B'\supseteq B$ and 
$D'\subseteq D$, and that an event $A$ is called \emph{increasing} if whenever
$\om\in A$ and $\om'\geq\om$ then also $\om'\in A$.  Also recall that $A$ is
called a \emph{cylinder event} if it only depends on a bounded region of
$\XX$, which is to say that there is a bounded set $\L\subseteq\XX$ such that
if $\om=\om'$ on $\L$ then $\om\in A$ if and only if $\om'\in A$.
\begin{definition}\label{basics_def}
Let $\k$ be a probability measure on $\Om$.  
\begin{itemize}
\item  We say that $\k$ is \emph{positively associated} if for  $A,B$ any
  increasing cylinder events,   $\k(A\cap B)\geq\k(A)\k(B)$.  
\item Another probability measure $\k_1$ on $\Om$ 
  \emph{stochastically dominates} $\k$ if for all increasing cylinder events
  $A$, we have $\k_1(A)\geq\k(A)$.  We write $\k_1\geq\k$.
\item We say that $\k$ has the \emph{positivity property} if for all
  $\eps>0$ there exists a constant 
  $0<c=c(\eps)<1$ such that for all $e\in E, v\in V, t\in\RR$,
  \begin{equation}
  c<\k(\mbox{no bridges in } \{e\}\times [t,t+\eps])<1-c
  \end{equation}
  and
  \begin{equation}
  c<\k(\mbox{no deaths in } \{v\}\times [t,t+\eps])< 1-c.
  \end{equation}
\end{itemize}
\end{definition}

\begin{proposition}
Let $q\geq1$.
The measures $\Phi^b_n,\Phi^b,\Psi^b_n,\Psi^b$ ($b=0,1$) are positively
associated and have the positivity property.  Moreover, $\Phi^1\geq\Phi^0$
and $\Psi^1\geq\Psi^0$.
\end{proposition}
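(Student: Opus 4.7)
The plan is to establish all claims first for the primal measures $\Phi_n^b, \Phi^b$ by discretisation, and then transfer them to the dual measures using the order-reversing bijection $(B,D)\mapsto(D,B)$.

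For any $\varepsilon>0$, I would consider the discrete random-cluster measure $\Phi_n^{b,\varepsilon}$ obtained by restricting the underlying Poisson processes of $\mu$ to a grid of mesh $\varepsilon$ inside $\Lambda_n$. Each such measure is a classical discrete random-cluster measure with $q\geq1$, hence is positively associated (via the FKG lattice condition) and satisfies $\Phi_n^{1,\varepsilon}\geq\Phi_n^{0,\varepsilon}$ (via Holley's inequality), as in~\cite{grimmett_RCM}. As $\varepsilon\downarrow0$, $\Phi_n^{b,\varepsilon}$ converges weakly to $\Phi_n^b$, and both positive association and the stochastic ordering in the boundary condition survive this weak limit when tested against increasing cylinder events. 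The positivity property for $\Phi_n^b$ is then a direct estimate: inserting or removing a single bridge or death in a short interval multiplies the density $q^{k_n^b(\omega)}$ by at most $q^{\pm c}$, where $c$ depends only on the maximal degree of $G$ (which is finite since $G$ is star-like), and combining this with the Poisson weights $e^{-\lambda\varepsilon}$ and $e^{-\delta\varepsilon}$ yields two-sided bounds uniform in $e,v,t,n$.

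All three properties pass to the infinite-volume limits $\Phi^b=\lim_n\Phi_n^b$ (whose existence is the preceding proposition) because each is a statement about increasing or short-interval cylinder events. For the duals, recall that $\Psi^b$ is the law of $\omega_\dual$ under $\Phi^{1-b}$, and that $\omega\mapsto\omega_\dual$ is an order-reversing measurable bijection: an increasing cylinder event $A_\dual\subseteq\Omega_\dual$ pulls back to a decreasing cylinder event $A\subseteq\Omega$. Positive association for $\Phi^{1-b}$ extends automatically from pairs of increasing events to pairs of decreasing events via complements and inclusion-exclusion, which yields positive association for $\Psi^b$; the positivity property is symmetric in bridges and deaths, so it transfers directly. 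For stochastic domination, if $A_\dual$ is an increasing cylinder event in $\Omega_\dual$ with decreasing preimage $A$ in $\Omega$, then
\begin{equation*}
\Psi^1(A_\dual)=\Phi^0(A)\geq\Phi^1(A)=\Psi^0(A_\dual),
\end{equation*}
the inequality being $\Phi^1\geq\Phi^0$ applied to the increasing event $A^c$.

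The main technical obstacle lies in the first step, namely justifying the weak convergence $\Phi_n^{b,\varepsilon}\to\Phi_n^b$ in the Skorokhod topology. The cluster-count functional $k_n^b$ is not continuous on $\Omega$, and one must verify that the exceptional configurations on which it jumps (for instance, a death coinciding with a bridge endpoint) carry vanishing mass as $\varepsilon\downarrow0$; this is the part of the argument that needs care in the continuum setting, and follows the treatment in~\cite{bjo_phd,bezuidenhout_grimmett}.
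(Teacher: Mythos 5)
Your proposal is correct and follows essentially the route that the paper itself leaves implicit: the paper omits the proof, remarking only that it is ``similar to the discrete random-cluster model'' with details in~\cite{bjo_phd}, and that omitted argument is precisely the discretisation/Holley--FKG route with weak limits that you describe, the $\eps\downarrow0$ convergence you flag being exactly the technical step those references supply. Your transfer to the dual measures is also sound: the map $(B,D)\mapsto(D,B)$ is order-reversing, positive association for increasing events yields it for decreasing events by taking complements, and $\Phi^1\geq\Phi^0$ applied to the (increasing) complement gives $\Psi^1(A_\dual)=\Phi^0(A)\geq\Phi^1(A)=\Psi^0(A_\dual)$, consistent with $\Psi^b$ being the law of $\om_\dual$ under $\Phi^{1-b}$.
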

The proof of this is similar to the discrete random-cluster model and is
omitted;  full details may be found in~\cite{bjo_phd}.

\section{The critical value}\label{sec:2}

\emph{We assume henceforth that $q=2$}.  It is known that,
if $G=\ZZ$, the critical value $\rho_c(2)=2$.  The following is the main
result of this paper. 
\begin{theorem}\label{main_thm}
Let $G$ be any star-like graph.  Then the critical value on $G\times\RR$ is
$\rho_c(2)=2$. 
\end{theorem}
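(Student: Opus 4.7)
I would prove $\rho_c(G)=2$ by establishing the two bounds $\rho_c(G)\leq 2$ and $\rho_c(G)\geq 2$ separately. The upper bound is the monotone direction: since $G$ is countably infinite, connected, and has only finitely many vertices of degree $\geq 3$, it contains an infinite arm isomorphic to $\ZZ_{\geq 0}$. The comparison/FKG inequality for the random-cluster model with $q\geq 1$ (equivalently, the second Griffiths inequality for Ising at $q=2$) yields
\begin{equation}
\Phi^1_G(x\leftrightarrow y)\geq\phi^1_{\ZZ_{\geq 0}}(x\leftrightarrow y)
\end{equation}
for any $x,y$ on the arm, since adding edges can only increase two-point connectivities. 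A brief symmetry-plus-domain-Markov argument shows that $\phi^1_{\ZZ_{\geq 0}}$ percolates whenever $\l/\d>2$, as a consequence of the known percolation of $\phi^1_{\ZZ}$ above $\rho=2$; taking $y\to\infty$ then gives $\theta^1_G>0$ and hence $\rho_c(G)\leq 2$.

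For the lower bound, monotonicity goes the wrong way and I would instead invoke the hypergraph duality of Section~\ref{sec:1}. Fix $\l/\d<2$. Locally on each arm the dual measure $\Psi^1_G$ is governed by parameters $\l'=q\d=2\d$, $\d'=\l/q=\l/2$ exactly as in Lemma~\ref{planar_duality_lemma}, so its ratio satisfies $\l'/\d'=4\d/\l>2$. A variant of the upper-bound argument, carried out on the hypergraph $\YY=\HH\times\RR$ (whose infinite structure is ``star-like'' in the same sense as $G$), gives that $\Psi^1_G$ percolates along each arm. Planar topology then converts this dual percolation into dual circuits in $\YY$ surrounding the origin, which block every primal path from $(\cO,0)$ to infinity, giving $\theta^1_G=0$.

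The main obstacle is the actual construction of these dual circuits. In the $\ZZ$ case, primal and dual live on isomorphic lattices and a standard box-crossing/square-root-trick argument produces dual circuits. Here the polygonal hyperedges of $\HH$ at the finitely many degree-$\geq 3$ vertices of $G$ must be traversed in order to join the dual paths coming in along different arms into a single closed loop. I would handle this by fixing a large ball $\L_n$ and, using positive association and the positivity property of Definition~\ref{basics_def}, showing that with probability bounded below uniformly in $n$ there exist both (i) on each arm a dual crossing of the annulus $\L_{2n}\setminus\L_n$, and (ii) a local dual connection within a bounded neighbourhood of the core joining these crossings through the polygonal hyperedges. Iterating over nested annuli $\L_{2^kn}\setminus\L_{2^{k-1}n}$ and applying Borel--Cantelli would then produce infinitely many such dual circuits almost surely, completing the proof.
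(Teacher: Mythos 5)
Your overall skeleton matches the paper's (upper bound by comparison with a line inside $G$; lower bound by passing to the supercritical dual, building blocking circuits on the arms, gluing them through the polygonal hyperedges, and iterating with Borel--Cantelli), but there is a genuine gap at the heart of your lower bound: the uniform-in-$n$ lower bound for the dual crossings along each arm. The marginal of the dual measure on an arm is only stochastically \emph{above} the half-plane measure with free boundary conditions on the cut and on the infinite boundary line (the measure $\psi^f$ of the paper), so it is for $\psi^f$ that you must prove percolation and annulus crossings. Supercriticality of the full-plane dual does not transfer by monotonicity --- it goes the wrong way, $\psi^f\leq\psi^0$ --- and your ``variant of the upper-bound argument'' does not address this: that comparison conditions on a decreasing event and therefore only bounds the arm's marginal from below by precisely the free-boundary half-plane measure, which is the problematic object. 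Likewise, the positivity property of Definition~\ref{basics_def} cannot deliver a crossing probability bounded below uniformly in $n$; it only gives bounds decaying exponentially in the size of the region being crossed. The paper's route around this is Proposition~\ref{exp_prop}, a finite-volume exponential decay estimate under \emph{wired} boundary conditions proved via the Lieb inequality together with the infinite-volume exponential decay (sharpness) imported from the literature; this is used to rule out primal blocking connections to the top of a wedge under $\phi^w$, hence to produce infinite dual wedge paths under $\psi^f$ (Lemma~\ref{wedge_lem}) and then the uniform half-circuit bound (Lemma~\ref{circuits_lem}). Without a substitute for this step, your nested-annulus construction has no uniform bound to feed into Borel--Cantelli.

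Two smaller points. First, you identify the gluing of the arm crossings through the polygonal hyperedges as the main obstacle, but this is the cheap part: since only finitely many vertices have degree at least three, the central region is bounded, and the positivity property plus positive association give a central blocking event of probability bounded below uniformly in $n$ (the paper's event $A_2$); the real difficulty sits on the arms, as above. Second, in your upper bound, the claim that the wired half-line (half-plane) measure percolates for $\l/\d>2$ by a ``brief symmetry-plus-domain-Markov argument'' is itself not immediate, since it raises the same half-plane boundary issues; the paper avoids this by comparing with a copy of $\ZZ$ inside $G$, where the critical ratio is already known, via conditioning on the decreasing event that no bridges join $Z\times\RR$ to its complement.
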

In other words, the critical value for any star-like graph is the same as for
$\ZZ$.   Simpler arguments than those presented here can be used
to establish the analogous result when $q=1$, namely that $\rho_c(1)=1$.
Also, the same arguments can be used to calculate the critical value of the
discrete graphs $G\times\ZZ$ when $q=1,2$.  

Here is a brief outline of the proof of Theorem~\ref{main_thm}.  First we make
the straightforward observation that $\rho_c(2)\leq 2$.
Second, we use exponential decay and the {\ghs} inequality to establish the
existence of certain infinite paths in the dual 
model  when $\l/\d<2$.  Finally, we show how to put these paths together to
form  ``blocking circuits'' in $\YY$, which prevent the existence of infinite
paths in $\XX$ when $\l/\d<2$.  Parts of the argument are inspired
by~\cite{gkr}.

\begin{lemma}\label{upperbound_lem}  
For $G$ any star-like graph, $\rho_c(2)\leq 2$.
\end{lemma}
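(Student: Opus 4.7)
The plan is to reduce $\rho_c(2)\leq 2$ to the known value $\rho_c^{\ZZ}(2)=2$ by embedding a copy of $\ZZ$ inside $G$ and invoking a monotonicity property of the wired random-cluster measure.

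First, I would extract from $G$ a subgraph $L\subseteq G$ with $L\cong\ZZ$ containing the origin $\cO$. Since $G$ is star-like, the set $V_{\geq 3}$ of vertices of degree at least three is finite; deleting $V_{\geq 3}$ from $G$ leaves a disjoint union of paths, at least one of which must be unbounded because $G$ itself is infinite. Combining this with the connectedness of $G$ and the assumption $\deg(\cO)\geq 2$, one can select two disjoint semi-infinite paths meeting at $\cO$ (possibly routed through $V_{\geq 3}$); their union is the desired $L$. The degenerate case $G\cong\ZZ_+$ can be treated separately by a minor variant of the $\ZZ$ argument.

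Next I would invoke the following subgraph comparison for $q\geq 1$: the marginal of $\Phi^1$ on the pair of processes supported in $L\times\RR$ stochastically dominates the wired random-cluster measure $\phi^1$ on $L\times\RR$ with the same parameters $\l,\d$. Heuristically, the extra edges of $G\setminus L$ only supply additional possible connections between points of $L$, and the wired boundary condition at infinity on the larger graph $G\times\RR$ is at least as connecting as the wired condition on $L\times\RR$ alone. This monotonicity is standard for the discrete wired random-cluster measure at $q\geq 1$; its continuum version is established in~\cite{bjo_phd}, and it is the principal technical input of the argument.

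Finally, combining these ingredients: for $\l/\d>2$, the known equality $\rho_c^{\ZZ}(2)=2$ implies that $\phi^1$ on $L\times\RR\cong\ZZ\times\RR$ assigns positive probability to the event that $(\cO,0)$ lies in an unbounded component. Stochastic domination of Step~2 then transfers this to $\Phi^1$ on $G\times\RR$, giving $\theta^1>0$ and hence $\rho_c(2)\leq 2$. The main obstacle is the comparison of Step~2; once it is in hand the remaining reasoning is routine.
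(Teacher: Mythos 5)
Your proposal is correct and takes essentially the same route as the paper: embed a copy of $\ZZ$ in $G$ and transfer percolation at $\l/\d>2$ from $\ZZ\times\RR$ to $G\times\RR$ by stochastic domination of the subgraph measure by (the marginal of) the measure on $G\times\RR$. The comparison you cite as standard is precisely what the paper proves in finite volume: condition $\Phi^b_n$ on the decreasing event $C_n$ that no bridges join $Z\times\RR$ to its complement, use the \dlr-property to identify $\Phi^b_n(\cdot\mid C_n)=\phi^b_n(\cdot)$, apply positive association to get $\phi^b_n\leq\Phi^b_n$, and let $n\to\infty$.
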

\begin{proof}
Any star-like graph $G$ contains an isomorphic copy of $\ZZ$ as a subgraph.
Let 
$Z$ be such a subgraph;  we may assume that $\cO\in Z$.  Also we let
$\phi^b_n,\phi^b$ denote the random-cluster measures on $Z\times\RR$.
For each $n\geq 1$, let $C_n$ be the event that in $\L_n$ there are no bridges
between $Z\times\RR$ and its 
complement.  Clearly each $C_n$ is a decreasing event.  It follows from a
standard property of random-cluster measures, sometimes called the
\dlr-property, that $\Phi^b_n(\cdot\mid C_n)=\phi^b_n(\cdot)$.  The proof of
this uses standard techniques~\cite{grimmett_RCM};  details for this model may
be found in~\cite{bjo_phd}.  If $A$ is an increasing cylinder
event, this means that
\begin{equation}
\phi^b_n(A)=\Phi^b_n(A\mid C_n)\leq\Phi^b_n(A),
\end{equation}
i.e. $\phi^b_n\leq\Phi^b_n$ for all $n$.  Letting $n\rightarrow\infty$ it
follows that $\phi^b\leq\Phi^b$.  If $\l/\d>2$ then 
$\phi^b((\cO,0)\leftrightarrow\infty)>0$ so then also 
\begin{equation}
\Phi^b((\cO,0)\leftrightarrow\infty)>0,
\end{equation}
which is to say that $\rho_c(2)\leq2$.
\end{proof}

\subsection{Infinite paths in the half-plane}

Let us now establish some facts about the random-cluster model on
$\ZZ_+\times\RR$ which will be useful later.  Our notation is as follows:  for
$n\geq 1$,
\begin{equation}
\begin{split}
S_n&=\{(a,t)\in\ZZ\times\RR: -n\leq a\leq n, |t|\leq n\}\\
S_n(m,s)&=S_n+(m,s)=\{(a+m,t+s)\in\ZZ_+\times\RR: (a,t)\in S_n\}.
\end{split}
\end{equation}
For brevity write $T_n=S_n(n,0)$;  also let $\partial$ denote the boundary,  
\begin{equation}
\partial S_n=\{(a,t)\in\ZZ\times\RR:a=\pm n\mbox{ or }t=\pm n\}
\end{equation}
and $\partial S_n(m,s)=\partial S_n+(m,s)$.
For $b=0,1$ and $\D$ one of $S_n,T_n$, we let $\phi^b_\D$ denote the
$q=2$ random-cluster measure on $\D$ with boundary condition $b$ and
parameters $\l,\d$.  Note that
\begin{equation}
\phi^b=\lim_{n\rightarrow\infty}\phi^b_{S_n},\qquad
\psi^b=\lim_{n\rightarrow\infty}\psi^b_{S_n}.
\end{equation}
We will also be using the limits
\begin{equation}
\phi^w=\lim_{n\rightarrow\infty} \phi^1_{T_n},\qquad
\psi^f=\lim_{n\rightarrow\infty} \psi^0_{T_n}.
\end{equation}
These are measures on configurations $\om$ on $\ZZ_+\times\RR$;  but according
to our definition they cannot be random-cluster measures since the regions
$T_n$ do not tend to the whole of $\ZZ\times\RR$.  However, standard arguments
let us deduce all the properties of $\phi^w,\psi^f$ that we need.  In
particular $\psi^f$ and $\phi^w$ are mutually dual (with the obvious
interpretation of duality) and they enjoy the positive
association and positivity properties of Definition~\ref{basics_def}.

Let $W$ be the ``wedge''
\begin{equation}
W=\{(a,t)\in\ZZ_+\times\RR: 0\leq t\leq a/2+1\},
\end{equation}
and write $0$ for the origin $(0,0)$.
\begin{lemma}\label{wedge_lem}  Let $\l/\d<2$.  Then
\begin{equation}
\psi^f(0\leftrightarrow\infty\mbox{ in } W)>0.
\end{equation}
\end{lemma}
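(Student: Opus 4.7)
The plan is to combine planar duality with the sharpness of the phase transition in the primal. By Lemma~\ref{planar_duality_lemma} applied on the half-plane, $\psi^f$ is itself a $q=2$ random-cluster measure with parameters $\l'=2\d$ and $\d'=\l/2$, so the hypothesis $\l/\d<2$ becomes $\l'/\d'=4\d/\l>2$. Thus $\psi^f$ lies in what is, by the $\ZZ$-result $\rho_c(2)=2$, the supercritical regime of the $\ZZ$-model, and the content of the lemma is that some of this supercritical percolation actually lives inside the wedge.

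The first main step would be to invoke sharpness from~\cite{bjogr2}: when $\l/\d<2$, the primal $\phi^w$ has exponentially decaying two-point connectivities. Via the standard duality ``no long primal connection'' $\leftrightarrow$ ``long dual path'', this produces lower bounds on the $\psi^f$-probability of dual crossings of suitably shaped rectangles in $\ZZ_+\times\RR$. The \ghs\ inequality (available for $q=2$) plays the role it does in~\cite{bjogr2,abf}, upgrading the exponential decay of the primal to crossing probabilities bounded away from zero uniformly in scale.

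The geometric core of the argument is a block-chaining construction adapted to the wedge. One would pick a sequence of rectangles $R_k\subseteq W$ with spatial extent doubling at each step and temporal extent growing like $c\cdot 2^k$ for some $c<1/2$, so that $R_k$ is contained in the constraint $t\le a/2+1$. Each $R_k$ contains a dual crossing in the short direction with probability bounded away from zero, by the preceding step. Positive association (the proposition following Definition~\ref{basics_def}) then lets one concatenate consecutive crossings, and the positivity property connects this chain back to a fixed neighbourhood of $(0,0)$, producing an infinite dual path from $0$ in $W$ with positive probability.

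The main obstacle is the geometric matching between the wedge slope $1/2$ and the uniform crossing estimates. The slope $1/2$ is precisely the threshold that permits the rectangles $R_k$ to sit inside $W$ while having aspect ratios for which the \ghs-based lower bound is available uniformly in $k$. A secondary technical point is the passage from full-plane estimates, where supercriticality of the dual is cleanest, to the half-plane free-boundary measure $\psi^f$; I would handle this by stochastic domination between boundary conditions and the reflection symmetry in the time direction, along lines close to~\cite{gkr}.
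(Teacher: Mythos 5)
There is a genuine gap, and it is precisely at the point the paper identifies as the heart of the lemma. You assert that sharpness from~\cite{bjogr2} gives exponential decay of two-point functions under $\phi^w$. But the sharpness result is for the infinite-volume measure on the whole of $\ZZ\times\RR$, while $\phi^w$ is the half-plane limit of \emph{wired} boxes and therefore carries an infinite wired boundary line: it stochastically dominates the full-plane measure, so upper bounds on connectivities do not transfer to it, and indeed two-point functions between points on the wired side do not decay at all. To bound $\phi^w$-connection probabilities one must go the other way, $\phi^w\leq\phi^1_\D$ for finite boxes $\D$, and then one needs exponential decay for the \emph{finite-volume wired} measure $\phi^1_{S_n}(0\leftrightarrow\partial S_n)$. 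That is exactly Proposition~\ref{exp_prop}, whose proof is nontrivial (a ghost-field construction plus the Lieb inequality, following Higuchi), and it is this ingredient -- not \ghs, whose role you misdescribe, and not infinite-volume sharpness alone -- that your outline is missing. Relatedly, the remark that the slope $1/2$ of $W$ is a ``threshold'' matched to uniform crossing estimates is off the mark: since the available bound is exponential in the distance, any fixed positive slope works.

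The second problem is the chaining step. Crossing probabilities that are merely ``bounded away from zero uniformly in scale'' cannot be concatenated by positive association into an infinite path of positive probability: the FKG bound over infinitely many scales is a product of constants strictly less than one, which vanishes. You would need the failure probabilities to be summable (which the exponential estimate does provide, once Proposition~\ref{exp_prop} is in place, making the RSW-style uniform-positivity framing unnecessary), or an argument like the paper's: show $\sum_n\phi^w((n,0)\leftrightarrow T\mbox{ in }W)<\infty$ by comparing with $\phi^1_{S_{n/3}(n,0)}$ and invoking Proposition~\ref{exp_prop}, conclude by Borel--Cantelli and planarity that an infinite dual path exists in $W$ almost surely, and then attach the origin using the positivity property and positive association. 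In short, your overall duality-plus-blocking strategy is the right one and close in spirit to the paper's, but as written the two load-bearing steps -- the decay estimate for the wired half-plane measure and the passage from crossing bounds to an infinite dual path -- are not justified.
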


Here is some intuition behind the proof of Lemma~\ref{wedge_lem}.  
The claim is well-known with $\psi^0$ in place of $\psi^f$, by standard
arguments using duality and exponential decay.  However, $\psi^f$ is
stochastically smaller than $\psi^0$, so we cannot deduce the result
immediately.  Instead we pass to the dual $\phi^w$ and establish directly a
lack of blocking paths.  The problem is the presence of the infinite ``wired
side'';  we get the required fast decay of two-point functions by using the
following result of~\cite{higuchi93_ii}, adapted to our model.

\begin{proposition}\label{exp_prop}
Let $\l/\d<2$.  There is $\a>0$ such that for all $n$,
\begin{equation}
\phi^1_{S_n}(0\leftrightarrow \partial S_n)\leq e^{-\a n}.
\end{equation}
\end{proposition}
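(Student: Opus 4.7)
The plan is to adapt to the continuous-time setting the argument of Higuchi~\cite{higuchi93_ii} for the classical two-dimensional Ising model.  The main input is exponential decay of full-plane two-point functions in the subcritical phase, which for the measure $\phi^0$ on $\ZZ\times\RR$ is supplied by the main theorem of~\cite{bjogr2} whenever $\l/\d<2$.

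The first step is to translate via the Edwards--Sokal coupling.  For $q=2$, one has
\begin{equation*}
\phi^1_{S_n}(0\leftrightarrow\partial S_n)\leq\sum_{y\in\partial S_n}\langle\s_0\s_y\rangle^+_{S_n},
\end{equation*}
the right-hand side being a sum of continuous-time Ising two-point functions in $S_n$ with $+$ boundary condition.  Since $|\partial S_n|$ grows only polynomially in $n$, it suffices to prove an exponential bound on $\langle\s_0\s_y\rangle^+_{S_n}$ that is uniform in $y\in\partial S_n$.

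The second step applies the GHS inequality to compare $+$-boundary correlations to free ones.  Viewing the $+$ boundary as the $h\to\infty$ limit of a positive external field localised on $\partial S_n$, and using that truncated two-point functions are non-increasing in $h\geq 0$, one obtains the Lebowitz-type bound
\begin{equation*}
\langle\s_0\s_y\rangle^+_{S_n}\leq\langle\s_0\rangle^+_{S_n}\langle\s_y\rangle^+_{S_n}+\langle\s_0\s_y\rangle^0_{S_n}.
\end{equation*}
The second term equals $\phi^0_{S_n}(0\leftrightarrow y)$ and is bounded by $Ce^{-\a'|y|}$ for $y\in\partial S_n$, thanks to~\cite{bjogr2}.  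The first term, however, reduces via Edwards--Sokal to a product one of whose factors is exactly $\phi^1_{S_n}(0\leftrightarrow\partial S_n)$, so this inequality alone is circular.

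The main obstacle is therefore to break this circularity.  Following Higuchi, I would do so by a scale-by-scale bootstrap:  starting from a crude bound obtained from the positivity property (Definition~\ref{basics_def}) together with the subcritical full-plane decay on an initial scale, one iterates the GHS-based inequality across a sequence of nested boxes, at each stage improving the exponent, until convergence to a genuine exponential rate uniform in $n$.  Two subsidiary points require checking for our model:  the GHS inequality itself (which follows from its classical counterpart by approximating the space-time Ising model as a Poisson limit of discrete Ising models in one more dimension and passing to the weak limit) and uniformity of the constants in the iteration.  With these verified, the proof of~\cite{higuchi93_ii} transfers essentially verbatim.
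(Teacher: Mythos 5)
Your reduction to Ising correlations and the GHS/Lebowitz comparison is fine as far as it goes (modulo the fact that in the space-time setting $\partial S_n$ is a union of line segments, so your sum over $y\in\partial S_n$ must be an integral), but you have correctly located the crux --- the term $\langle\s_0\rangle^+_{S_n}$, which is exactly $\phi^1_{S_n}(0\leftrightarrow\partial S_n)$ --- and then left it unresolved. ``A scale-by-scale bootstrap improving the exponent at each stage'' is not an argument: you do not state the recursive inequality, explain why iterating it yields a rate uniform in $n$, or address how the continuum model affects the constants in the iteration. Since the entire content of the proposition is precisely the passage from subcritical \emph{infinite-volume} decay to \emph{finite-volume wired} decay, the proposal has a genuine gap at its central step; ``the proof of Higuchi transfers essentially verbatim'' is a citation plus hope, not a proof, and the need to verify this transfer in continuous time is exactly why a full proof is required here.

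For comparison, the paper avoids any multiscale iteration by replacing GHS with the Lieb inequality. The wired boundary of $S_n$ is realised by ghost-bonds to a point $g$ at infinity: field intensity $\l$ along the vertical sides, and intensity $m\to\infty$ on a thin slab $\overline S_n\setminus S_n$ above and below. The Lieb inequality (proved for this space-time model in~\cite{bjogr2}) gives $\tilde\phi^1_{m,n}(0\leftrightarrow g)\leq e^{8\d}\int_X dx\,\tilde\phi^0_{m,n}(0\leftrightarrow x)\,\tilde\phi^1_{m,n}(x\leftrightarrow g)$ with $X$ a surface separating $0$ from $g$; dropping the second factor, dominating $\tilde\phi^0_{m,n}$ by the full-plane measure $\phi^0$, and invoking the infinite-volume exponential decay of~\cite{bjogr2} yields a bound of order $n e^{-\tilde\a n}$ in one step. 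Conditioning on the event that the slab is wholly joined to $g$ and letting $m\to\infty$ recovers $\phi^1_{S_n}$ by the \dlr-property. In other words, the one-step Lieb bound does exactly the job your bootstrap was meant to do. If you wish to keep the GHS route you must actually write out Higuchi's iteration and check it survives the continuum limit; otherwise you should switch to the Lieb-inequality argument.
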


In words, the two-point function decays exponentially also in finite 
volume.
Higuchi~\cite{higuchi93_ii}  proves a more general result for the
discrete Ising model, but attributes to Aizenman the simpler result  for that
model.  It was pointed out to us by Grimmett (personal communication) that the
original proof may be shortened by using the Lieb inequality in place of the
{\ghs} inequality, and we present the full proof for our model using the Lieb
inequality here.  Apart from the Lieb inequality, the proof uses another fact
known for  the $q=2$ Ising case but not for the general case $q\geq1$,
namely exponential decay in the infinite volume subcritical Gibbs state. 
\begin{proof}
Let $\overline S_n\supseteq S_n$ denote the ``tall'' box 
\begin{equation}
\overline S_n=\{(a,t)\in\ZZ\times\RR: -n\leq a\leq n, |t|\leq n+1\}.
\end{equation}
We will use a variant of the random-cluster measure on $\overline S_n$ which
has non-constant intensities for bridges and deaths, and also a process of
\emph{ghost-bonds}.  To this end we create a  new site $g$, which we think
of as a ``point at infinity'', and let
$\d(\cdot),\g(\cdot):\ZZ\times\RR\rightarrow\RR$ and
$\l(\cdot):(\ZZ+1/2)\times\RR\rightarrow\RR$ be bounded, nonnegative and
measurable functions.  Given independent Poisson processes of bridges and
deaths of rates $\l(\cdot)$ and $\d(\cdot)$, respectively, and of links to $g$
of rate $\g(\cdot)$, we may define random-cluster measures as in
Definition~\ref{rcm_def}, where now any components connected to $g$ are to be
counted as the same.  

The particular intensities we use are these.  Fix $n$, and fix
$m\geq0$, which we think of as large.  Let
$\l(\cdot)$, $\d(\cdot)$ and $\g_m(\cdot)$ be given by 
\begin{equation}
\begin{split}
\d(a,t)&=\left\{
\begin{array}{ll}
\d, & \mbox{if } (a,t)\in S_n \\
0, & \mbox{otherwise},
\end{array}
\right.\\
\l(a+1/2,t)&=\left\{
\begin{array}{ll}
\l, & \mbox{if } (a,t)\in S_n \mbox{ and } (a+1,t)\in S_n\\
0, & \mbox{otherwise},
\end{array}
\right.\\
\g_m(a,t)&=\left\{
\begin{array}{ll}
\l, & \mbox{if exactly one of } 
      (a,t) \mbox{ and } (a+1,t) \mbox{ is in } S_n\\ 
m, & \mbox{if } (a,t)\in \overline S_n\setminus S_n\\
0, & \mbox{otherwise}.
\end{array}
\right.
\end{split}
\end{equation}
In words, the intensities are as usual ``inside'' $S_n$ and in particular
there is no external field in the interior;  on the left and right sides of
$S_n$, the external field simulates the wired boundary condition;  and on top
and bottom, the external field simulates an approximate wired boundary (as
$m\rightarrow\infty$). 
We introduce another parameter $r\in[0,1]$, and let $\tilde \phi^r_{m,n}$
denote the random-cluster measure on $\overline S_n$ with intensities
$\l(\cdot),\d(\cdot),r\g_m(\cdot)$. 
Note that $\tilde\phi^0_{m,n}$ and $\phi^0_{S_n}$ agree on events defined on
$S_n$, for any $m$.  

Let $X$ denote $\overline S_n\setminus S_n$ together with the left and right
sides of $S_n$.
By the Lieb inequality, proved for the space-time Ising formulation of the
present model in~\cite{bjogr2} (see also~\cite{bjo_phd}), we have that
\begin{equation}
\tilde\phi^1_{m,n}(0\leftrightarrow g)\leq
e^{8\d}\int_X dx\;\tilde\phi^0_{m,n}(0\leftrightarrow x)
\tilde\phi^1_{m,n}(x\leftrightarrow g)\leq
e^{8\d}\int_X dx\;\tilde\phi^0_{m,n}(0\leftrightarrow x),
\end{equation}
since $X$ separates $0$ from $g$.  Therefore, by stochastic domination by the 
infinite-volume measure,
\begin{equation}
\tilde\phi^1_{m,n}(0\leftrightarrow g)\leq
e^{8\d}\int_X dx\;\phi^0(0\leftrightarrow x).
\end{equation}
All the points $x\in X$ are at distance at least $n$ from the origin.  By
exponential decay in the infinite volume, as proved in~\cite{bjogr2} using
similar methods to the discrete case~\cite{abf}, there is an absolute constant
$\tilde\a>0$ such that  
\begin{equation}
\tilde\phi^1_{m,n}(0\leftrightarrow g)\leq e^{8\d}|X|e^{-\tilde\a n}=
e^{8\d}(8n+2)e^{-\tilde\a n}.
\end{equation}
Now let $C$ be the event that all of $\overline S_n\setminus S_n$ belongs to
the connected component of $g$, which is to say that all points on   
$\overline S_n\setminus S_n$ are linked to $g$.  Then by the \dlr-property
of random-cluster measures the conditional measure
$\tilde\phi^1_{m,n}(\cdot\mid C)$ agrees with $\phi^1_{S_n}(\cdot)$ on events
defined on $S_n$.  Therefore
\begin{equation}
\begin{split}
\phi^1_{S_n}(0\leftrightarrow \partial S_n)&=
\tilde\phi^1_{m,n}(0\leftrightarrow \partial S_n\mid C)
=\tilde\phi^1_{m,n}(0\leftrightarrow g\mid C)\\
&\leq\frac{\tilde\phi^1_{m,n}(0\leftrightarrow g)}{\tilde\phi^1_{m,n}(C)}
\leq\frac{e^{8\d}}{\tilde\phi^1_{m,n}(C)}\cdot (8n+2)e^{-\tilde\a n}.
\end{split}
\end{equation}
Since $\tilde\phi^1_{m,n}(C)\rightarrow 1$ as $m\rightarrow\infty$ we conclude
that 
\begin{equation}
\phi^1_{S_n}(0\leftrightarrow \partial S_n)
\leq e^{8\d}(8n+2)e^{-\tilde\a n}.
\end{equation}
Since each $\phi^1_{S_n}(0\leftrightarrow \partial S_n)<1$ it is a simple
matter to tidy this up to get the result claimed.
\end{proof}

\begin{proof}[of Lemma~\ref{wedge_lem}]
Let $T=\{(a,a/2+1): a\in\ZZ_+\}$ be the ``top'' of the wedge $W$.  
We claim that
\begin{equation}
\sum_{n\geq 1}\phi^w((n,0)\leftrightarrow T \mbox{ in } W)<\infty.
\end{equation}
Once this is proved, it follows from the Borel--Cantelli lemma that with
probability one under $\phi^w$, at most
finitely many of the points $(n,0)$ are connected to $T$ inside $W$.  Hence
under the dual measure $\psi^f$ there is an infinite path inside $W$ with
probability one, and by the positivity- and positive association properties it
follows that 
\begin{equation}
\psi^f(0\leftrightarrow\infty\mbox{ in } W)>0,
\end{equation}
as required.

To prove the claim we note that, if $n$ is larger than some constant, then
the event ``$(n,0)\leftrightarrow T \mbox{ in } W$'' implies the event
``$(n,0)\leftrightarrow \partial S_{n/3}(n,0)$''.  The latter event, being
increasing, is more likely under the measure $\phi^1_{S_{n/3}(n,0)}$ than
under $\phi^w$.  But by Proposition~\ref{exp_prop},
\begin{equation}
\phi^1_{S_{n/3}(n,0)}((n,0)\leftrightarrow \partial S_{n/3}(n,0))
=\phi^1_{S_{n/3}}(0\leftrightarrow \partial S_{n/3})\leq e^{-\a n/3},
\end{equation}
which is clearly summable.
\end{proof}

\subsection{Proof of the main result}

We prove one more lemma about the half-plane before going on to the main
result.  
\begin{lemma}\label{circuits_lem}
Let $\l/\d<2$.  There exists $\eps>0$ such that for each $n$,
\begin{equation}
\psi^f((0,2n+1)\leftrightarrow (0,-2n-1)\mbox{ off } T_n)\geq \eps.
\end{equation}
\end{lemma}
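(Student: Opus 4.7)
The strategy is to combine Lemma~\ref{wedge_lem} with the $t$-translation and $t$-reflection symmetries of $\psi^f$ (both inherited from the symmetric definitions of the Poisson processes and the symmetric boxes $T_n$) together with the FKG inequality, producing two infinite dual paths that bracket $T_n$ from above and below, and then gluing them in the overlap region to the right of $T_n$.

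\emph{Step 1 (two wedge paths).} Since $\psi^f$ is invariant under $t \mapsto -t$, Lemma~\ref{wedge_lem} also applies to the reflected wedge $\tilde W := \{(a,t) \in \ZZ_+\times\RR : -a/2 - 1 \le t \le 0\}$, giving $\psi^f(0 \leftrightarrow \infty \mbox{ in } \tilde W) \ge p_0 > 0$. Using $t$-translation invariance of $\psi^f$, define
\[
U_+ := \{(0, 2n+1) \leftrightarrow \infty \mbox{ in } (0,2n+1) + \tilde W\}, \qquad U_- := \{(0, -2n-1) \leftrightarrow \infty \mbox{ in } (0,-2n-1) + W\};
\]
each has probability at least $p_0$, independently of $n$. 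A direct check shows both translated wedges lie in $\Omega_n := (\ZZ_+\times\RR)\setminus T_n$: for $0\le a\le 2n$, the first wedge has $t \ge 2n+1-(a/2+1) \ge n$, and symmetrically the second has $t\le -n$; for $a > 2n$ both wedges are already to the right of $T_n$. By positive association, $\psi^f(U_+\cap U_-) \ge p_0^2$.

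\emph{Step 2 (gluing).} On $U_+\cap U_-$ we have two infinite dual paths in $\Omega_n$, one from each endpoint. The two wedges overlap in the region $\{a\ge 4n+2\}$. I would introduce a further increasing ``connecting'' event $V_n$ supported in this overlap region---for instance a vertical dual crossing of a segment $\{a^*\}\times [-s,s]$ for a suitable $a^*\ge 4n+2$, or a dual cluster in a box to the right of $T_n$ meeting both wedge paths---and conclude via a final application of FKG that
\[
\psi^f\bigl((0,2n+1)\leftrightarrow(0,-2n-1) \mbox{ off } T_n\bigr) \ge p_0^2\cdot\psi^f(V_n).
\]

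\emph{Main obstacle.} The delicate point is choosing $V_n$ so that $\psi^f(V_n)$ is bounded below uniformly in $n$: the positivity property of Definition~\ref{basics_def} only controls events localised in bounded regions, whereas the connecting region here has extent growing linearly in $n$. I would resolve this either by proving a half-plane box-crossing (RSW-type) estimate for $\psi^f$, extracted from Lemma~\ref{wedge_lem} by a sliding argument, or by invoking uniqueness of the infinite cluster for $\psi^f$ in the style of Burton--Keane (justified by $t$-translation invariance, positive association, and the finite-energy positivity property). Either auxiliary input closes the argument.
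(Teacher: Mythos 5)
Your Step 1 is exactly the paper's starting point (wedge lemma plus reflection and vertical-translation invariance of $\psi^f$, then positive association), but the gap you flag in Step 2 is the actual content of the lemma, and the paper closes it without any gluing event $V_n$ at all. The observation you are missing is that an \emph{infinite} path confined to the horizontal strip $R=\{(a,t):a\ge 0,\ -2n-1\le t\le 2n+1\}$ has probability zero: since $\psi^f\le\mu$ and under the independent measure $\mu$ the (decreasing, independent) events ``no bridges between column $k$ and column $k+1$ inside $R$'' occur for infinitely many $k$ by Borel--Cantelli, one gets $\psi^f(0\leftrightarrow\infty\mbox{ in }R)=0$. Consequently, on your event $U_-$ the infinite path in the translated wedge cannot stay in $R$; as the wedge lies above $t=-2n-1$, the path must exit through the top, i.e.\ $(0,-2n-1)$ is joined to the line $L_{2n+1}$ by a path lying in the wedge (hence off $T_n$ and inside the strip), and symmetrically on $U_+$ the point $(0,2n+1)$ is joined to $L_{-2n-1}$. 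Now no further connecting event is needed: the two crossing paths live in the half-strip, and their four endpoints occur in alternating order along its boundary ($\pi_2$'s endpoint on the bottom line, then $(0,-2n-1)$, then $(0,2n+1)$ up the axis $a=0$, then $\pi_1$'s endpoint on the top line), so by planarity they must intersect. Thus $U_+\cap U_-$ already forces (a.s.) the event of the lemma, and FKG gives the uniform bound $\eps=p_0^2$. This is precisely why the bound does not degenerate in $n$: the ``gluing'' is done deterministically by topology, not probabilistically by an event of growing spatial extent.

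Neither of your proposed repairs closes the gap as stated. A half-plane RSW/box-crossing estimate at scale $n$ for this continuum FK--Ising measure is a substantial input that is not available in the paper and does not follow from Lemma~\ref{wedge_lem} by a ``sliding argument''; the positivity property of Definition~\ref{basics_def} indeed only helps at bounded scales, as you note. The uniqueness-of-the-infinite-cluster route has two problems: Burton--Keane-type arguments are delicate here because $\psi^f$ lives on $\ZZ_+\times\RR$ and is invariant only under translations in the $t$-direction; and, more fundamentally, uniqueness would only give that the two infinite clusters coincide, i.e.\ a connection between $(0,2n+1)$ and $(0,-2n-1)$ \emph{somewhere}, not a connection \emph{off} $T_n$, which is what the lemma (and its later use in building blocking circuits) requires. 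So as written the proposal does not constitute a proof; the missing idea is the ``no infinite cluster in a strip'' step forcing both wedge paths to cross the strip, combined with the planar crossing argument.
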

\begin{proof}
Let $L_n=\{(a,n):a\geq0)\}$ be the horizontal line at height $n$, and let
$\eps>0$ be such that 
$\psi^f(0\leftrightarrow\infty\mbox{ in } W)\geq\sqrt\eps$.
We claim that 
\begin{equation}
\psi^f((0,-2n-1)\leftrightarrow L_{2n+1}\mbox{ off } T_n)\geq \sqrt\eps.
\end{equation}
Clearly $\psi^f$ is invariant under reflection in the $x$-axis, and standard
arguments~\cite[Theorem~4.19]{grimmett_RCM} imply that it is also invariant
under vertical translation.  Thus once the claim is proved we get that
\begin{multline}
\psi^f((0,2n+1)\leftrightarrow (0,-2n-1)\mbox{ off } T_n)\geq\\
\geq\psi^f((0,-2n-1)\leftrightarrow L_{2n+1}\mbox{ off } T_n\\
\qquad\mbox{ and } (0,2n+1)\leftrightarrow L_{-2n-1}\mbox{ off } T_n)
\geq (\sqrt\eps)^2,
\end{multline}
as required.  See Figure~\ref{paths_fig}.
\begin{figure}[hbt]
\centering
\includegraphics{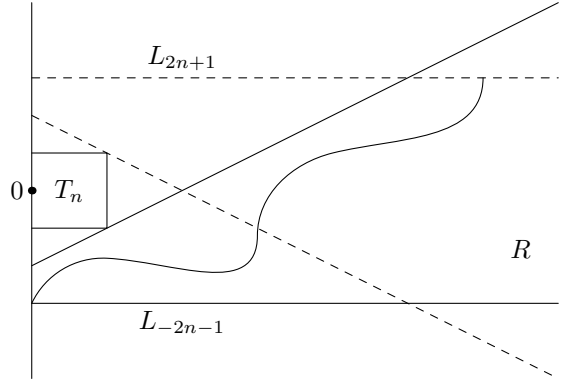}
\caption{Construction of a ``half-circuit'' in $\ZZ_+\times\RR$.  With
  probability one, any infinite path in the lower wedge must reach
  the line $L_{2n+1}$, and similarly for any infinite path in the upside-down
  wedge.  Any pair of such paths starting on the horizontal axis must
  cross.}\label{paths_fig}  
\end{figure}

The claim follows if we prove that
\begin{equation}\label{strip_eqn}
\psi^f(0\leftrightarrow\infty\mbox{ in } R)=0,
\end{equation}
where $R$ is the strip
\begin{equation}
R=\{(a,t):a\geq 0,-2n-1\leq t\leq 2n+1\}.
\end{equation}
However,~\eqref{strip_eqn} follows from the positivity property of
Definition~\ref{basics_def} and the Borel--Cantelli lemma, since the event
``no bridges between $\{k\}\times[-2n-1,2n+1]$ and 
$\{k+1\}\times[-2n-1,2n+1]$'' must happen for infinitely many $k$ with
$\psi^f$-probability one.  To see this we can compare $\psi^f$ with an
independent percolation measure, as in the proof of
Proposition~\ref{exp_prop}.  We have that $\psi^f\leq\mu$, where $\mu$ has
parameters $\l,\d$;  under $\mu$ the events above are independent, so
\begin{equation}
\psi^f(0\leftrightarrow\infty\mbox{ in } R)\leq
\mu(0\leftrightarrow\infty\mbox{ in } R)=0.
\end{equation}
\end{proof}
\begin{proof}[of Theorem~\ref{main_thm}]
We may assume that $G\neq\ZZ$, since the case $G=\ZZ$ is known.
Let $\l/\d<2$, and recall that $G$ consists of finitely many infinite
``arms'', where each vertex has degree two, together with a ``central''
collection of other vertices.  On each of the arms, let us fix one
arbitrary vertex (of degree two) and call it an \emph{exit point}.  Let $U$
denote the set of exit points of $G$.  

Given an exit point $u\in U$, call its
two neighbours $v$ and $w$;  we may assume that they are labelled so that only
$v$ can reach the origin $\cO$ without passing $u$.  If the edge $uv$ were
removed from $G$, the resulting graph would consist of two components, where
we denote by $J_u$ the component containing $w$.
Let $\hat \Phi^b_n,\hat \Phi^b$ denote the marginals of
$\Phi^b_n,\Phi^b$ on $X_u:=J_u\times\RR$;  similarly let
$\hat\Psi^b_n,\hat\Psi^b$ 
denote the marginals of the dual measures.  Of course $X_u$ is isomorphic to
the half-plane graph considered in the previous subsection.  
By positive association and the \dlr-property 
of random-cluster measures, $\hat\Phi_n^0\leq\phi^1_{T_n(u)}$, so letting
$n\rightarrow\infty$ also $\hat\Phi^0\leq\phi^w$.  Passing to the dual, it
follows that $\hat\Psi^1\geq\psi^f$.  The (primal) edge $uv$ is a
\emph{vertex} in the line-hypergraph;  denoting it still by $uv$ we therefore
have by Lemma~\ref{circuits_lem} that
there is an $\eps>0$ such that for all  $n$,
\begin{equation}
\Psi^1((uv,-2n-1)\leftrightarrow(uv,2n+1)
\mbox{ off }T_n(u)\mbox{ in }X_u)\geq\eps.
\end{equation}
Here $T_n(u)$ denotes the copy of the box $T_n$ contained in $X_u$.
Letting $A$ denote the intersection of the events above over all exit points
$u$, and letting $A_1=A_1(n)$ be the dual event
$A_1=\{\om_\dual:\om\in A\}$,  it follows from positive association that
$\Phi^0(A_1)\geq\eps^k$, where $k=|U|$  is the number of exit points.  Note
that $A_1$ is a decreasing event in the primal model.  The intuition is that
on $A_1$, no point in $T_n(u)$ can reach $\infty$ without passing the line 
$\{u\}\times[-2n-1,2n+1]$, since there is a dual blocking path in $X_u$.

Next let $I$ denote the (finite) subgraph of $G$ spanned by the complement of
all the $J_u$ for $u\in U$, and let $A_2=A_2(n)$ denote the event that for all
vertices  $v\in I$, the intervals $\{v\}\times[2n+1,2n+2]$ and 
$\{v\}\times[-2n-1,-2n-2]$
all contain at least one death and the endpoints of no bridges (in the primal
model).  By the positivity property, there is $\eta>0$ independent of $n$ such
that $\Phi^0(A_2)\geq\eta$. 
So by positive association $\Phi^0(A_1\cap A_2)\geq\eta\eps^k>0$.  On the
event $A_1\cap A_2$, no point inside the union of 
$I\times[-n,n]$ with $\cup_{u\in U}T_n(u)$ can lie on an infinite path.  See
Figure~\ref{blocking_fig}. 
\begin{figure}[htb]
\centering
\includegraphics{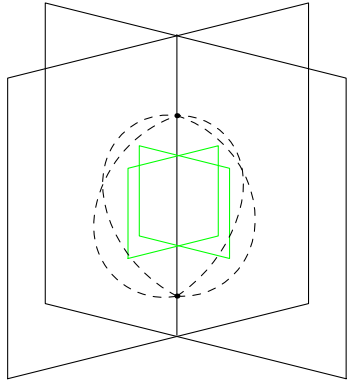}
\caption{The dashed lines indicate dual paths that block any primal connection
  from the interior to $\infty$.  Note that this figure illustrates only the
  simplest case when $G$ is a junction of lines at a single
  point.}\label{blocking_fig} 
\end{figure}
Taking the intersection of the $A_1(n)\cap A_2(n)$ over all $n$, it follows
that  
\begin{equation}
\Phi^0(\mbox{there is no unbounded connected component})\geq\eta\eps^k.
\end{equation}
The event that there is no unbounded connected component is a tail event.
All infinite-volume random-cluster measures are tail-trivial
(see~\cite[Theorem~4.19]{grimmett_RCM} or~\cite{bjo_phd}), so it follows,
whenever $\l/\d<2$, that  
\begin{equation}
\Phi^0(0\not\leftrightarrow\infty)=1.
\end{equation}
In other words, $\rho_c(2)\geq 2$.  Combined with the opposite bound in
Lemma~\ref{upperbound_lem}, this gives the result.
\end{proof}

\section*{Acknowledgements}
This research was carried out while the author was a Ph.D. student at
the University of Cambridge, UK, and the Royal Institute of Technology
(KTH), Sweden.  The author gratefully acknowledges funding from KTH during
this period, as well as generous support from Riddarhuset, Stockholm.
The author would also like to thank Geoffrey Grimmett for
many helpful discussions, to Petra Scudo for providing
references on the quantum Ising model, and to James
Norris for commenting on an early version of this article.

\bibliographystyle{plain}
\bibliography{../all}

\end{document}